\newtheorem{Theorem}{Theorem}[section]
\newtheorem{Lemma}[Theorem]{Lemma}
\newtheorem{Corollary}[Theorem]{Corollary}
\newtheorem{Remark}[Theorem]{Remark}
\newcommand{\RR}{{{\rm I} \kern -.15em {\rm R} }}
\newcommand{\C}{{{\rm l} \kern -.42em {\rm C} }}
\newcommand{\nat}{{{\rm I} \kern -.15em {\rm N} }}
\newcommand{\dv}{\mbox{\rm div}\ }
\newcommand{\be}{\begin{equation}}
\newcommand{\ee}{\end{equation}}
\newcommand{\beq}{\begin{eqnarray}}
\newcommand{\eeq}{\end{eqnarray}}
\newcommand{\beqs}{\begin{eqnarray*}}
\newcommand{\eeqs}{\end{eqnarray*}}
\newcommand{\bt}{\begin{Theorem}}
\newcommand{\et}{\end{Theorem}}
\newcommand{\br}{\begin{Remark}}
\newcommand{\er}{\end{Remark}}
\newcommand{\bc}{\begin{Corollary}}
\newcommand{\ec}{\end{Corollary}}
\newcommand{\bl}{\begin{Lemma}}
\newcommand{\el}{\end{Lemma}}
\newcommand{\bd}{\begin{definition}}
\newcommand{\ed}{\end{definition}}
\renewcommand{\geq}{\geqslant}
\renewcommand{\ge}{\geqslant}
\renewcommand{\leq}{\leqslant}
\renewcommand{\le}{\leqslant}
\title{Exponential decay of solutions to linear evolution  equations\\ with time-dependent time delay}
\author{
Elisa Continelli\footnote{Email: elisa.continelli@graduate.univaq.it}\hspace{0.3cm}\&\hspace{0.2cm}Cristina Pignotti\footnote{Email: cristina.pignotti@univaq.it} \\Dipartimento di Ingegneria e Scienze dell'Informazione e Matematica\\
		Universit\`{a} degli Studi di L'Aquila\\
		Via Vetoio, Loc. Coppito, 67100 L'Aquila Italy}
\date{}
\begin{document}

\textwidth=160 mm

\textheight=225mm

\parindent=8mm

\frenchspacing

\maketitle

\begin{abstract}
In this note, we analyze an abstract evolution equation with time-dependent time delay and time-dependent delay feedback coefficient. We assume that the operator corresponding to the nondelayed part of the model generates an exponentially stable semigroup. Under an appropriate assumption on the delay feedback, we prove the well-posedness and an exponential stability estimate for our model.  Applications of our abstract results to concrete models are also illustrated. 
\end{abstract}

\vspace{5 mm}

\def\qed{\hbox{\hskip 6pt\vrule width6pt
height7pt
depth1pt  \hskip1pt}\bigskip}

 %%{\bf 2020 Mathematics Subject Classification:} 93D15,

 {\bf Keywords and Phrases:}  evolution equations, wave equation, stability estimates, time delay.

\section{Introduction}
The study of evolution equations in presence of delay or memory terms attracted, in recent years, the interest of many researchers. The presence of a time delay makes the problems more difficult to deal with and, of course, it is important to include in the models time delays/memory terms to take into account time lags, such as reaction times, maturation times, times needed to receive some information, etc., commonly present in real life phenomena.

On the other hand, it is well-known that a time delay can produce instability phenomena. In particular, for the damped wave equation, it has been proven that an arbitrarily small delay can make the model unstable even if it is  uniformly asymptotically stable in absence of delay effects (see e.g.  \cite{Datko, DLP, 
NPSicon06, XYL}).

 Nevertheless, suitable feedback laws can ensure the delayed model has the same stability properties as the  undelayed one (see \cite{NPSicon06, XYL}).
Here, we are interested in proving exponential stability estimates for abstract linear evolution equations in presence of a time delay feedback. In particular, we will consider time-variable time delays, that is time delays that are functions of the time variable.   

Let $\tau:[0, +\infty)\rightarrow (0, +\infty),$ be the time delay function belonging to $C(0,+\infty)$.
We assume that
\begin{equation}\label{bound}
0\le \tau (t)\le \bar\tau, \quad \forall \ t\ge 0,
\end{equation}
for a suitable positive constant $\bar{\tau}$.
\\We consider the following abstract model:

\begin{equation}\label{PA}
\begin{array}{l}
\displaystyle{U'(t)=AU(t)+k(t)BU(t-\tau(t)),\quad t\in(0,\infty),}\\
\displaystyle{U(t)=f(t)\quad t\in [-\bar\tau , 0],}
\end{array}
\end{equation}
where the operator $A$ generates an exponentially stable $C_0$-semigroup $(S(t))_{t\ge 0}$ in a Hilbert space $H$, and $B$ is a  continuous linear operator of $H$ into itself. Moreover, the delay damping coefficient $k:[-\bar{\tau},+\infty)\to\RR$ belongs to ${\mathcal L}^1_{loc}([-\bar{\tau},+\infty);\RR )$, and $\tau$ is a continuous time delay function satisfying \eqref{bound}. We denote with $U_0:=f(0)$.
\\By the assumptions on the operator $A,$ there exist two positive constants $M$ and $\omega$ such that 
\begin{equation}\label{SGdecay}
\Vert S(t)\Vert \le M e^{-\omega t}\,.
\end{equation}
Under some mild assumptions on the involved functions and parameters, we will establish the  well-posedness of the problem \eqref{PA}, and we will obtain exponential decay estimates for its solutions.

Moreover, on the delay feedback coefficient, we assume that the integrals on intervals of length $\bar\tau$ are uniformly bounded, namely, 
\begin{equation}\label{damp_coeff}
\int_{t-\bar{\tau}}^t |k(s)|ds\leq K, \quad \forall \ t\ge  0,
\end{equation}
for some $K>0.$

Stability results for abstract evolution equations with delay have been already studied in   \cite{JEE15, NicaisePignotti18, KP}. In \cite{JEE15, NicaisePignotti18} it is analyzed the case of a single constant delay and also the delay damping coefficient is assumed to be constant. The analysis has then been extended in \cite{KP} by considering, as here, (multiple) time dependent time delays.
However,  here we work in a more general setting. Indeed, in \cite{KP}, the classical set of assumptions usually employed to deal with wave-type equations in presence of time variable time delays (see e.g. \cite{NPV11, ChentoufMansouri, Feng} ) is used. In particular, in addition to \eqref{bound}, it is required that
$\tau \in W^{1,\infty}(0, +\infty)$ and that
$\tau^\prime (t)\le c<1.$ 
On the contrary, in the present note, we only assume that the time delay is a function bounded from above. Therefore, our results significantly
extend and improve the ones in \cite{KP}.

A concrete model that can be rewritten in the form \eqref{PA} is, e.g., the wave equation with frictional damping and delay feedback. In the case of constant delay feedback coefficient and constant time delay, this model has been first studied in \cite{SCL12}. Under a suitable smallness condition on the delay term coefficient, an exponential decay estimate has been proven. This result has then  been extended to linear  wave equations with internal delay feedback and boundary dissipative condition in  \cite{ANP10}. In \cite{AlNP, guesmia} it is instead analyzed the case of the wave equations with  delay feedback and viscoelastic damping. 

For other stability estimates in the  presence of  time delay effects, for specific models, mainly in the case of constant time delay and constant delay damping coefficient, we quote, among the others,  \cite{Ait, AG, AM, Chentouf, Dai, Oquendo, Said, Ma}. We mention also the recent papers  \cite{KP2} and \cite{Capistrano} dealing with delayed Korteweg-de Vries-Burgers and  higher-order dispersive equations,
respectively.

The rest of the paper is organized as follows. In section \ref{well} we prove the well-posedness of the model. We give two different proofs: the first, via a step by step procedure,  under the additional assumption that the time delay function is bounded from below by a positive constant; the second, in the general setting, using a fixed point theorem.   In section \ref{stab}, we deduce the exponential stability estimate for the linear model. In section \ref{nonlin}, we extend the well-posedness and exponential stability results to a nonlinear model with a Lipschitz continuous nonlinearity. Finally, section \ref{Examples} illustrates some concrete examples for which the abstract theory is applicable: namely the damped wave equation with delay and the elasticity system with delay.

\section{Well-posedness }\label{well}
 \hspace{5mm}

\setcounter{equation}{0}
Now, we prove the following well-posedness result.
\begin{Theorem}\label{well_p}
Let $f:[-\bar\tau, 0]\to H$ be a continuous function. Then, the problem \eqref{PA} has a unique (weak) solution given by Duhamel's formula
\begin{equation}\label{Du}
U(t)=S(t)U_0+\int_0^tS(t-s)\,k(s)BU(s-\tau(s))\ ds,
\end{equation}
for all $t\ge 0.$
\end{Theorem}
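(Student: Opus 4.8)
The plan is to exploit the variation-of-constants formula for $C_0$-semigroups: with $g(s):=k(s)BU(s-\tau(s))$ viewed as an inhomogeneity, the appropriate notion of weak (mild) solution of \eqref{PA} coincides with a continuous $H$-valued function satisfying the integral identity \eqref{Du}. Hence the whole issue is to show that \eqref{Du} — in which the unknown $U$ appears both directly and, through the delayed argument $s-\tau(s)$, inside the integral — has one and only one continuous solution on $[0,+\infty)$. Throughout I read $U(s-\tau(s))$ as the datum $f(s-\tau(s))$ whenever $s-\tau(s)<0$ and as the genuine unknown otherwise.

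First, under the extra hypothesis $\tau(t)\ge\tau_0>0$, I would argue by a step-by-step prolongation scheme. Since $s-\tau(s)\le s-\tau_0$, on each interval $[n\tau_0,(n+1)\tau_0]$ the delayed argument lies in $(-\infty,n\tau_0]$, where $U$ is already known (equal to $f$ for $n=0$, and to the solution constructed in the previous steps afterwards). On such an interval the map $s\mapsto S(t-s)k(s)BU(s-\tau(s))$ is therefore known and Bochner integrable, because $\Vert S(t-s)\Vert\le M$ by \eqref{SGdecay}, $B$ is bounded, $k\in L^1_{loc}$ and $U$ is continuous; formula \eqref{Du} then defines $U$ explicitly and continuously on the next interval. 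Iterating covers $[0,+\infty)$ and gives both existence and uniqueness in this regime.

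Second, to remove the lower bound on $\tau$ (the case where the step length $\tau_0$ degenerates to $0$ and the recursion breaks down), I would set up a contraction argument. On $C([0,T];H)$ I consider the map $\Phi$ sending $U$ to the right-hand side of \eqref{Du}; for $U_1,U_2$ agreeing with $f$ on $[-\bar\tau,0]$, using $\Vert S(t-s)\Vert\le M$ and the vanishing of the difference on the set where $s-\tau(s)<0$, one obtains
\begin{equation}
\Vert(\Phi U_1)(t)-(\Phi U_2)(t)\Vert\le M\Vert B\Vert\left(\int_0^T|k(s)|\,ds\right)\sup_{[0,T]}\Vert U_1-U_2\Vert .
\end{equation}
Since $k\in L^1_{loc}$, the integral $\int_0^T|k|$ is finite, so I can partition $[0,T]$ into finitely many subintervals $[t_i,t_{i+1}]$ with $M\Vert B\Vert\int_{t_i}^{t_{i+1}}|k(s)|\,ds<1$; on each such subinterval $\Phi$ is a contraction (the delayed argument either falls in the current subinterval, where the contraction factor acts, or earlier, where it uses already-determined values), and gluing the successive fixed points extends the solution to an arbitrary $[0,T]$, hence to $[0,+\infty)$.

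The main obstacle I anticipate is precisely the general case: when $\tau$ may vanish one cannot localise the delayed dependence to strictly earlier intervals, so the recursion must be replaced by a genuine fixed-point estimate, and the right quantity to make small is not the length of the time step but the local $L^1$-mass of the feedback coefficient, which is finite by $k\in L^1_{loc}$ (and uniformly bounded over windows of length $\bar\tau$ by \eqref{damp_coeff}, a fact that will matter for the later stability estimate). Uniqueness then follows from the contraction on each subinterval, or equivalently from a Gr\"onwall argument applied to the difference of two solutions of \eqref{Du}.
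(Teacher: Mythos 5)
Your proposal is correct and follows essentially the same route as the paper: a step-by-step prolongation when $\tau$ is bounded below by $\tau_0>0$, and otherwise a Banach fixed-point argument in which the contraction constant is $M\Vert B\Vert\int|k|$ over a suitably short time window, made small thanks to $k\in\mathcal{L}^1_{loc}$. The only (inessential) difference is in the globalization: you partition an arbitrary $[0,T]$ into finitely many subintervals on each of which the map contracts and glue the fixed points, whereas the paper first derives a Gr\"onwall bound on the local solution and then rules out a finite maximal existence time by restarting the fixed-point argument at that time.
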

\begin{proof}
	Let $f\in C([-\bar\tau, 0];H)$. We give two different proofs, the first one only valid under the additional assumption that the time delay is bounded from below by a positive constant.
\\{\bf Case 1.} Assume that
\begin{equation}\label{below}
	\tau(t)\ge\tau_0>0, \quad \forall t\ge 0,
\end{equation}
for a suitable positive constant $\tau_0.$ We can  argue step by step, as in the proof of Proposition 2.1 of \cite{KP}, by restricting ourselves each time to time intervals of length $\tau_0.$
\\First we consider $t\in [0,\tau_0]$. Then, from \eqref{below}, $t-\tau(t)\in [-\bar{\tau}, 0].$  So,
setting $F(t) =k(t)BU(t-\tau(t)),$ $t\in [0,\tau_0],$ we have that $F(t)= k(t)Bf(t-\tau(t)),$ $t\in [0,\tau_0]$.
Then, problem \eqref{PA} can be rewritten, in the  interval $[0,\tau_0]$, as a standard inhomogeneous evolution problem:
\begin{equation}\label{step1}
\begin{array}{l}
U'(t)=AU(t)+F(t) \quad \mbox{in}\ (0,\tau_0),\\
U(0)=U_0.
\end{array}
\end{equation}
Since  $k\in \mathcal{L}^1_{loc}([-\bar{\tau},+\infty);\RR)$, $B$ is a bounded linear operator and $f\in C([-\bar\tau,0];H)$, we have that $F\in \mathcal{L}^1((0,\tau_0);H)$.
Therefore, applying  \cite[Corollary 2.2]{Pazy} there exists a unique solution $U \in C([0,\tau_0]; H)$ of \eqref{step1} satisfying the Duhamel's formula
$$
U(t)= S(t)U_0+\int_0^tS(t-s) F (s) ds,\quad t\in [0,\tau_0].
$$
Therefore,
$$
U(t)= S(t)U_0+\int_0^tS(t-s)k(s)BU(s-\tau(s)) ds,\quad t\in [0,\tau_0].
$$
Next, we consider the time interval $[\tau_0, 2\tau_0]$ and also define
$F(t)=k(t)BU(t-\tau(t)),$ for $t\in [\tau_0, 2\tau_0].$ Note that, if  $t\in [\tau_0, 2\tau_0],$ then $t-\tau(t)\in [-\bar{\tau}, \tau_0]$ and so $U(t-\tau(t))$ is known from the first step. 
Then $F_{\vert_{ [\tau_0, 2\tau_0]}}$ is a known function and it belongs to $\mathcal{L}^1((\tau_0,2\tau_0); H)$.
So we can rewrite our model \eqref{PA} in the time interval $[\tau_0,2\tau_0 ]$ as the inhomogeneous evolution problem
\begin{equation}\label{step2}
\begin{array}{l}
U'(t)=AU(t)+F(t) \quad \mbox {for }\ t\in (\tau_0 ,2\tau_0),\\
U(\tau)=U (\tau_0^-).
\end{array}
\end{equation}
Then, by the standard theory of abstract Cauchy problems, we have a unique continuous solution $U:[\tau_0, 2\tau_0]\rightarrow H$ satisfying
$$
U(t)= S(t-\tau_0 )U(\tau_0^-) +\int_{\tau_0} ^tS(t -s) F (s) ds,\quad t\in [\tau_0 ,2\tau_0],
$$
and so
$$
U(t)= S(t-\tau_0 )U(\tau_0^-) +\int_{\tau_0} ^t S(t -s) k(s)BU (s-\tau (s)) ds,\quad t\in [\tau_0 ,2\tau_0].
$$
Putting together the partial solutions obtained in the first and second steps we have a unique continuous solution $U:[0,2\tau_0]\rightarrow \RR$
satisfying  the Duhamel's formula
$$
U(t)= S(t)U_0+\int_0^t S(t-s) k(s)BU (s-\tau(s)) ds,\quad t\in [0,2\tau_0].
$$
Iterating this procedure we can find a unique solution $U\in C([0,+\infty);H)$ satisfying the representation formula \eqref{Du}.
\\{\bf Case 2.}	Let $\tau(\cdot)$ be a continuous function satisfying \eqref{bound}. In this case, since assumption \eqref{below} does not necessarily hold, we cannot use a step by step procedure in order to get the existence of a solution to \eqref{PA} with the initial datum $f$, as we did in {Case 1}. We have rather to employ a different method, based on the use of the Banach's fixed point Theorem.
\\Now, since $k\in {\mathcal L}^1_{loc}([-\bar{\tau},+\infty);\RR )$, there exists $T>0$ such that 
\begin{equation}\label{contrazione}
	\lVert k\rVert_{{\mathcal L}^1([0,T];\RR )}=\int_{0}^{T}\lvert k(s)\rvert ds<\frac{1}{M\lVert B\rVert}.
\end{equation} 
We define the set $$C_f([-\bar\tau,T];H):=\{U\in C([-\bar\tau,T];H): U(s)=f(s),\,\forall s\in [-\bar{\tau},0]\}.$$ 
Let us note that $C_f([-\bar\tau,T];H)\neq \emptyset$, since it suffices to take 
$$U(t)=\begin{cases}
	U_0,\quad &t\in(0,T],\\
	f(t),\quad &t\in [-\bar{\tau},0],
\end{cases}$$
to have that $U	\in C_f([-\bar\tau,T];H)$. 
\\It is immediate to see that 
$C_f([-\bar\tau,T];H)$ with the norm
$$\lVert U\rVert_{C([-\bar\tau,T];H)}=\max_{r\in[-\bar\tau, T]}\lVert U(r)\rVert,\quad \forall U\in C([-\bar\tau,T];H),$$
is a Banach space.
\\Now, we define the map
$\Gamma:C_f([-\bar{\tau},T];H)\rightarrow C_f([-\bar{\tau},T];H)$ given by
$$\Gamma U(t)=\begin{cases}
	S(t)U_0+\int_{0}^t S(t-s)k(s)BU(s-\tau(s))\, ds,\quad &t\in [0,T],
	\\f(t),\quad &t\in [-\bar{\tau},0).
\end{cases}$$
We claim that $\Gamma$ is well-defined. Indeed, let $U\in C_f([-\bar{\tau},T];H)$. Then, 
from the semigroup theory, $t\mapsto S(t)U_0$ is continuous. Also, since $U(\cdot)$ is continuous in $[-\bar{\tau},T]$, $\tau(\cdot)$ is a continuous function and $B$ is a bounded linear operator from $H$ into itself, $[0,T]\ni t\mapsto BU(t-\tau(t))$ is continuos. Moreover, $k\in {\mathcal L}^1([0,T];\RR )$. So $k(\cdot)BU(\cdot-\tau(\cdot))\in {\mathcal L}^1([0,T];H )$. Hence, the map $t\mapsto \int_0^t S(t-s)k(s)BU(s-\tau(s))ds$ is continuous in $[0,T]$. Thus, $\Gamma U\in C([0,T];H)$. Finally, since $\Gamma U=f$ in $[\bar{\tau},0]$ with $f\in C([-\bar{\tau},0];H)$ and $f(0)=U_0$, $\Gamma U\in C_f([\bar{\tau},T];H)$ and $\Gamma$ is well-defined.
\\Now, let $U,V\in C_f([\bar{\tau},T];H)$. For all $t\in [-\bar{\tau},0]$,
$$\lVert \Gamma U(t)-\Gamma V(t)\rVert=0.$$
On the other hand, for all $t\in (0,T]$,
$$\begin{array}{l}
	\vspace{0.3cm}\displaystyle{\lVert \Gamma U(t)-\Gamma V(t)\rVert\leq \int_{0}^{t}\lVert S(t-s)\rVert\lvert k(s)\rvert \lVert BU(s-\tau(s))-BV(s-\tau(s))\rVert ds}\\
	\vspace{0.3cm}\displaystyle{\hspace{2cm}\leq M\lVert B\rVert\int_{0}^{t}\lvert k(s)\rvert \lVert U(s-\tau(s))-V(s-\tau(s))\rVert ds}\\
	\vspace{0.3cm}\displaystyle{\hspace{2cm}\leq M\lVert B\rVert\lVert U-V\rVert_{C([-\bar{\tau},T];H)}\int_{0}^{T}\lvert k(s)\rvert  ds}\\
	\displaystyle{\hspace{2cm}=MB\lVert k\rVert_{\mathcal{L}^1([0,T];\RR)}\lVert U-V\rVert_{C([-\bar{\tau},T];H)}.}
\end{array}$$
Thus, $$\lVert \Gamma U(t)-\Gamma V(t)\rVert\leq MB\lVert k\rVert_{\mathcal{L}^1([0,T];\RR)}\lVert U-V\rVert_{C([-\bar{\tau},T];H)},\quad\forall t\in [-\bar{\tau},T],$$
from which $$\lVert \Gamma U-\Gamma V\rVert_{C([-\bar{\tau},T];H)}\leq M\lVert B\rVert\lVert k\rVert_{\mathcal{L}^1([0,T];\RR)}\lVert U-V\rVert_{C([-\bar{\tau},T];H)}.$$
Now, from \eqref{contrazione} we have that $M\lVert B\rVert\lVert k\rVert_{\mathcal{L}^1([0,T];\RR)}<1$. Hence, $\Gamma $ is a contraction. Then, from the Banach's Theorem, $\Gamma $ has a unique fixed point $U\in C_f([-\bar{\tau},T];H)$, i.e. \eqref{PA} has a unique solution $U\in C([0,T];H)$ given by the Duhamel's formula $$U(t)=S(t)U_0+\int_{0}^t S(t-s)k(s)BU(s-\tau(s))\,ds,\quad \forall t\in [0,T].$$
Now, let us note that the solution $U$ is bounded. Indeed, for all $t\in [0,T]$,
$$\begin{array}{l}
	\displaystyle{\lVert U(t)\rVert\leq M\lVert U_0\rVert+M\lVert B\rVert\int_{0}^{t}\lvert k(s)\rvert \lVert U(s-\tau(s))\rVert ds}\\
	\displaystyle{\hspace{1cm}\leq M\lVert U_0\rVert+M\lVert B\rVert\lVert k\rVert_{\mathcal{L}^1([0,T];\RR)}\max_{r\in[-\bar\tau, 0]}\lVert f(r)\rVert+M\lVert B\rVert\int_{0}^{t}\lvert k(s)\rvert \max_{r\in[0,s]}\lVert U(r)\rVert ds.}
\end{array}$$
Then, $$\begin{array}{l}
	\displaystyle{\max_{r\in[0,t]}\lVert U(r)\rVert\leq M\left(\lVert U_0\rVert+\lVert B\rVert\lVert k\rVert_{\mathcal{L}^1([0,T];\RR)}\max_{r\in[-\bar\tau, 0]}\lVert f(r)\rVert\right)+M\lVert B\rVert\int_{0}^{t}\lvert k(s)\rvert \max_{r\in[0,s]}\lVert U(r)\rVert ds.}
\end{array}$$
Hence,  the Gronwall's estimate yields
$$\max_{r\in[0,t]}\lVert U(r)\rVert\leq M\left(\lVert U_0\rVert+\lVert B\rVert\lVert k\rVert_{\mathcal{L}^1([0,T];\RR)}\max_{r\in[-\bar\tau, 0]}\lVert f(r)\rVert\right)e^{M\lVert B\rVert\int_{0}^{t}\lvert k(s)\rvert ds,}$$
from which, taking into account of \eqref{contrazione},
$$\lVert U(t)\rVert\leq e\left(M\lVert U_0\rVert+\max_{r\in[-\bar\tau, 0]}\lVert f(r)\rVert\right), \quad \forall t\in [0,T].$$
Thus, the solution $U$ is bounded and we can extend it up to some maximal interval $[0,\delta)$, $\delta>0$. We claim that $\delta=+\infty$. Indeed, assume by contradiction that $\delta<+\infty$. Then, being $U$ bounded, we can consider the following problem
\begin{equation}\label{PAsecondo}
	\begin{array}{l}
		\displaystyle{V'(t)=AV(t)+k(t)BV(t-\tau(t)),\quad t\in(0,\infty),}\\
		\displaystyle{V(t)=U(t)\quad t\in [\delta-\bar\tau , \delta),}\\
		\displaystyle{V(\delta)=U(\delta^-).}
\end{array}
\end{equation}
Arguing as before, there exists $T'>0$ such that
\begin{equation}\label{contrazione2}
	\lVert k\rVert_{\mathcal{L}^1([\delta,T'],\RR)}=\int_{\delta}^{T'}\lvert k(s)\rvert ds<\frac{1}{M\lVert B\rVert}.
\end{equation}
We then set
$$C_U([\delta-\bar{\tau},T'];H)=\{V\in C([\delta-\bar{\tau},T'];H):V(s)=U(s),\forall s\in [\delta-\bar{\tau},\delta), V(\delta)=U(\delta^-)\},$$
which is a nonempty closed subset of $C([\delta-\bar{\tau},T'];H)$. \\Next, we define the map $\Gamma:C_U([-\bar{\tau},T];H)\rightarrow C_U([-\bar{\tau},T];H)$ given by
$$\Gamma V(t)=\begin{cases}
	S(t-\delta)U(\delta^-)+\int_{\delta}^t S(t-s)k(s)BV(s-\tau(s)),\quad &t\in [\delta,T'],
	\\U(t),\quad &t\in [\delta-\bar{\tau},\delta).
\end{cases}$$
We have that $\Gamma$ is well-defined and, using the same arguments employed at the beginning of {Case 2}, \eqref{contrazione2} implies that $\Gamma$ is a contraction. So, $\Gamma$ has a unique fixed point, i.e. \eqref{PAsecondo} has a unique continuous $V$ solution given by the Duhamel formula $$V(t)=S(t-\delta)U(\delta^-)+\int_{\delta}^t S(t-s)k(s)BV(s-\tau(s)),\quad t\in [\delta,T'].$$
Thus, putting together the solutions $U$, $V$, we get the existence of a unique continuous solution to \eqref{PA} that satisfies the Duhamel's formula \eqref{Du} and that is defined in $[0,\delta')$, with $\delta'>\delta$. This contradicts the maximality of $\delta$. Hence, $\delta=+\infty$, i.e. \eqref{PA} has a unique global solution $U\in C([0,+\infty);H)$ given by \eqref{Du}.
\end{proof}

\section{Exponential stability }\label{stab}
 \hspace{5mm}

\setcounter{equation}{0}

Under an appropriate relation between the problem's parameters the system \eqref{PA} is exponentially stable.
In particular, we assume that

\begin{equation}\label{assumption_delay}
M  \Vert B\Vert e^{\omega\bar{\tau}} \int_0^t \vert k(s)\vert ds  \leq \gamma+\omega' t, \quad \forall  t\geq0,
\end{equation}
for suitable constants $\gamma\geq 0$ and $\omega'\in[0,\omega).$

\begin{Theorem}\label{generaleCV}
Assume \eqref{assumption_delay}.
For every $f\in C([-\bar\tau, 0]; H),$ the solution $U\in C([0, +\infty); H)$ to \eqref{PA} with the initial datum $f$ satisfies the exponential decay estimate
\begin{equation}
\label{stimaesponenziale}
||U(t)||\leq Me^\gamma \left (\Vert U_0\Vert+e^{\omega \bar\tau}K\lVert B\rVert \max_{s\in[-\bar\tau, 0]}\left\{\Vert e^{\omega s} f(s)\Vert \right\}\right )e^{-(\omega -\omega')t}, 
\end{equation}
for any $t\geq 0$.
\end{Theorem}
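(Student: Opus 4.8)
The plan is to run a weighted Gronwall argument directly on Duhamel's formula \eqref{Du}. First I would take norms in \eqref{Du} and apply the semigroup bound \eqref{SGdecay}, obtaining
\[
\Vert U(t)\Vert \le Me^{-\omega t}\Vert U_0\Vert + M\Vert B\Vert \int_0^t e^{-\omega(t-s)}\vert k(s)\vert\,\Vert U(s-\tau(s))\Vert\,ds.
\]
Multiplying through by $e^{\omega t}$ moves the exponential weight inside the integral and suggests working with the weighted quantity $\Phi(t):=\sup_{r\in[0,t]}e^{\omega r}\Vert U(r)\Vert$, which is finite on each compact interval because $U\in C([0,+\infty);H)$ by Theorem \ref{well_p}.

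The key manipulation is to absorb the delay into the weight. Writing $e^{\omega s}=e^{\omega\tau(s)}e^{\omega(s-\tau(s))}$ and using $\tau(s)\le\bar\tau$ from \eqref{bound}, I would bound $e^{\omega s}\Vert U(s-\tau(s))\Vert \le e^{\omega\bar\tau}\,e^{\omega(s-\tau(s))}\Vert U(s-\tau(s))\Vert$. Then I split the integration set $[0,t]$ according to the sign of $s-\tau(s)$. On the part where $s-\tau(s)\ge 0$ the factor $e^{\omega(s-\tau(s))}\Vert U(s-\tau(s))\Vert$ is dominated by $\Phi(s)\le\Phi(t)$, since $s-\tau(s)\le s$. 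On the complementary part, where $s-\tau(s)<0$, one has $U(s-\tau(s))=f(s-\tau(s))$, so this factor is at most $\max_{r\in[-\bar\tau,0]}\Vert e^{\omega r}f(r)\Vert$; crucially, this set is contained in $[0,\bar\tau]$ (because $s<\tau(s)\le\bar\tau$ there), so by \eqref{damp_coeff} the integral of $\vert k\vert$ over it is at most $K$. Collecting the terms and taking the supremum over the left endpoint gives
\[
\Phi(t)\le M\Vert U_0\Vert + M\Vert B\Vert e^{\omega\bar\tau}K\max_{r\in[-\bar\tau,0]}\Vert e^{\omega r}f(r)\Vert + M\Vert B\Vert e^{\omega\bar\tau}\int_0^t\vert k(s)\vert\,\Phi(s)\,ds.
\]

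Denoting the constant sum of the first two terms by $C_0$, this is a Gronwall inequality for $\Phi$, yielding $\Phi(t)\le C_0\exp\bigl(M\Vert B\Vert e^{\omega\bar\tau}\int_0^t\vert k(s)\vert\,ds\bigr)$. At this point assumption \eqref{assumption_delay} enters decisively: it replaces the exponent by $\gamma+\omega' t$, so $\Phi(t)\le C_0e^{\gamma}e^{\omega' t}$. Since $e^{\omega t}\Vert U(t)\Vert\le\Phi(t)$, dividing by $e^{\omega t}$ produces exactly \eqref{stimaesponenziale}. I expect the main obstacle to be the bookkeeping in the middle step: one must route the delay both through the exponential weight (producing the $e^{\omega\bar\tau}$ factor) and through the case distinction on $s-\tau(s)$, and then recognize that the history contribution is supported in $[0,\bar\tau]$ so that the uniform bound \eqref{damp_coeff} applies. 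Once the weighted supremum $\Phi$ is arranged so that the Gronwall step closes, assumption \eqref{assumption_delay} converts the growth factor into the desired decay with rate $\omega-\omega'>0$.
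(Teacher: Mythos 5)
Your proposal is correct and follows essentially the same route as the paper: a weighted Duhamel estimate, transfer of the delay into the weight via $e^{\omega s}\le e^{\omega\bar\tau}e^{\omega(s-\tau(s))}$, control of the history contribution by $K$ through \eqref{damp_coeff}, a Gronwall step, and finally assumption \eqref{assumption_delay} to convert the growth factor into decay at rate $\omega-\omega'$. The only (harmless) differences are that you split the integral according to the sign of $s-\tau(s)$ rather than at $s=\bar\tau$, and you run Gronwall on the global running supremum $\sup_{r\in[0,t]}e^{\omega r}\Vert U(r)\Vert$ instead of the paper's windowed maximum over $[t-\bar\tau,t]\cap[0,t]$; both close the same inequality and yield the same constant.
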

\begin{proof} Let $f\in C([-\bar\tau, 0]; H)$. Let $U\in C([0, +\infty); H)$ be the solution to \eqref{PA} with the initial condition $f$. From Duhamel's Formula, we have that
$$
\displaystyle{||U(t)||\leq Me^{-\omega t} ||U_0||+Me^{-\omega t} \int_0^t e^{\omega s}|k(s)| \cdot ||BU(s-\tau(s))|| ds, \quad \forall t\ge 0. }
$$
Then, for all $t\geq \bar{\tau}$, we deduce that
\begin{equation}\label{N1}
\begin{array}{l}
\displaystyle{ ||U(t)||\le Me^{-\omega t} ||U_0||+Me^{-\omega t} \int_0^{\bar\tau} e^{\omega s} |k(s)| \cdot ||{B}U(s-\tau(s))|| ds}\\
\hspace{1.5 cm}
\displaystyle{+ Me^{-\omega t} \int_{\bar\tau}^t e^{\omega s}  |k(s)|\cdot ||BU(s-\tau(s))||  ds }\\
\hspace{1.2 cm}
\displaystyle{
\le Me^{-\omega t} ||U_0||+M||B||e^{-\omega t}e^{\omega\bar\tau} \int_0^{\bar\tau} e^{\omega (s-\tau(s))} |k(s)| \cdot ||  U(s-\tau(s))|| ds}\\
\hspace{1.5 cm}
\displaystyle{+ M||B||e^{-\omega t}e^{\omega\bar\tau} \int_{\bar\tau}^t e^{\omega (s-\tau(s))} |k(s)|\cdot ||U(s-\tau(s))||  ds.}
\end{array}
\end{equation}
Now, observe that
\begin{equation}\label{March16_4}
\begin{array}{l}
\displaystyle{
\int_0^{\bar\tau} e^{\omega (s-\tau(s))}\vert k(s)\vert\cdot \Vert U(s-\tau(s))\Vert ds}\\
\displaystyle{\hspace{1 cm}
\le
\int_0^{\bar\tau} \vert k(s)\vert\left (   \max_{r\in[-\bar\tau, 0]}\left\{ e^{\omega r}\Vert f(r)\Vert\right \}+\max_{r\in[0,s]}  \left\{e^{\omega r}\Vert U(r)\Vert \right\}    \right) ds}\\
\hspace{1 cm}\le\displaystyle{ K \max_{r\in[-\bar\tau, 0]}\left\{e^{\omega r}\Vert f(r)\Vert\right\}+\int_0^{\bar\tau} \vert k(s)\vert \max_{r\in[0,s]}\left\{e^{\omega r}\Vert U(r)\Vert\right\} ds.
}
\end{array}
\end{equation}
Then, using \eqref{March16_4} in \eqref{N1}, we deduce
\begin{equation}\label{diseq}
	\begin{array}{l}
		\displaystyle{ ||U(t)||\le Me^{-\omega t} \left (||U_0||+
			e^{\omega\overline\tau}  K\Vert B\Vert \max_{r\in[-\bar\tau, 0]}\left\{e^{\omega r}\Vert f(r)\Vert\right\}
			\right )
		}\\
		\hspace{1,5 cm}
		\displaystyle{+ M\Vert B\Vert e^{-\omega t} e^{\omega\bar\tau}\int_{0}^t  |k(s)|\max_{r\in [s-\bar\tau, s]\cap [0,s]} \left\{e^{\omega r}||U(r)||\right\}  ds,}
	\end{array}
\end{equation}
for all $t\geq \bar{\tau}$. On the other hand, for all $t\in [0,\bar{\tau}]$, it holds that
$$
\begin{array}{l}
	\displaystyle{ ||U(t)||\le Me^{-\omega t} ||U_0||+M\Vert B\Vert e^{-\omega t}e^{\omega \bar{\tau}} \int_0^{t} e^{\omega (s-\tau(s))} |k(s)| \cdot ||U(s-\tau(s))|| ds.}
\end{array}
$$
Then, arguing as before,
$$\begin{array}{l}
	\displaystyle{
		\int_0^{t} e^{\omega (s-\tau(s))}\vert k(s)\vert\cdot \Vert U(s-\tau(s))\Vert ds}\\
	\displaystyle{\hspace{1 cm}
		\le
		\int_0^{t} \vert k(s)\vert\left (   \max_{r\in[-\overline\tau, 0]}\left\{ e^{\omega r}\Vert f(r)\Vert\right \}+\max_{r\in[0,s]}  \left\{e^{\omega r}\Vert U(r)\Vert \right\}    \right) ds}\\
	\displaystyle{\hspace{1 cm}\le  K\max_{r\in[-\bar\tau, 0]}\left\{e^{\omega r}\Vert f(r)\Vert\right\}+\int_0^{t}\vert k(s)\vert \max_{r\in[0,s]}\left\{e^{\omega r}\Vert U(r)\Vert\right\} ds}
\end{array}$$
for all $t\in [0,\bar{\tau}]$. Hence, \eqref{diseq} holds also for $t\in [0,\bar{\tau}]$. So, we deduce
$$
\begin{array}{l}
\displaystyle{ e^{\omega t}||U(t)|| \le M \left (||U_0||+
e^{\omega\bar\tau} K\Vert B\Vert \max_{s\in[-\bar\tau, 0]}\left\{\Vert e^{\omega s} f(s)\Vert\right\}
\right )}\\
\hspace{2 cm}
\displaystyle{+M  \Vert B\Vert  e^{\omega\bar\tau} \int_0^t\vert k(s)\vert  \max_{r\in [s-\bar\tau, s]\cap [0,s]}\left\{e^{\omega r} ||U(r)||\right \}ds, \quad \forall t\ge 0.}
\end{array}
$$
Now, we note that it also holds 
$$
\begin{array}{l}
\displaystyle{ \max_{s\in [t-\bar\tau, t]\cap [0,t]}\left\{e^{\omega s}||U(s)||\right\} \le M  \left (||U_0||+
e^{\omega\bar\tau} K\Vert B\Vert \max_{s\in [-\bar\tau, 0]}\left\{\Vert e^{\omega s} f(s)\Vert\right\}
\right )}\\
\hspace{2 cm}
\displaystyle{+M \Vert B\Vert e^{\omega\bar\tau}  \int_0^t  \vert k(s)\vert   \max_{r\in [s-\bar\tau, s]\cap [0,s]}\left\{e^{\omega r} ||U(r)||\right \}ds, \quad \forall t\ge 0.}
\end{array}
$$
Hence, if we denote 
$$\tilde u (t):=\max_{s\in [t-\bar\tau, t]\cap [0,t]}\left\{e^{\omega s}||U(s)||\right\},$$
 Gronwall's estimate implies
$${\tilde u}(t)\le {\tilde M} e^{  M\Vert B\Vert  e^{\omega\overline\tau}\int_0^t \vert k(s)\vert ds},\quad \forall t\geq 0,
$$
where
$$\tilde M:= M\left (
\Vert U_0\Vert+e^{\omega \bar\tau}K \Vert B\Vert \max_{s\in[-\bar\tau, 0]}\left\{\Vert e^{\omega s} f(s)\Vert\right\}
\right ). 
$$
Then, 
$$e^{\omega t}\Vert U(t)\Vert\le {\tilde M} e^{  M \Vert B\Vert e^{\omega\bar\tau}\int_0^t \vert k(s)\vert ds},\quad \forall t\geq 0.
$$
Finally, assumption \eqref{assumption_delay}, yields
$$\begin{array}{l}
	\vspace{0.3cm}\displaystyle{\Vert U(t)\Vert\le {\tilde M} e^{  M \Vert B\Vert e^{\omega\bar\tau}\int_0^t \vert k(s)\vert ds}e^{-\omega t}}\\
	\displaystyle{\hspace{2cm}\leq {\tilde M} e^{  \gamma+\omega't}e^{-\omega t}=\tilde{M}e^\gamma e^{-(\omega-\omega')t},}
\end{array}$$
for all $t\geq 0$, which proves the exponential decay estimate \eqref{stimaesponenziale}.
\end{proof}

\section{A nonlinear model}\label{nonlin}
 \hspace{5mm}

\setcounter{equation}{0}
As an easy generalization of previous results, we can consider the nonlinear model

\begin{equation}\label{PANL}
\begin{array}{l}
\displaystyle{U'(t)=AU(t)+k(t)BU(t-\tau(t))+G(U(t)),\quad t\in(0,\infty),}\\
\displaystyle{U(t)=f(t)\quad t\in [-\bar\tau , 0],}
\end{array}
\end{equation}
where $A,$$B,$ $k(\cdot), \tau(\cdot)$ are as before, and we denote $U_0:=f(0)$. Moreover, $G:H\rightarrow H$ is Lipschitz continuous, namely there exists $L>0$ such that
\begin{equation}\label{Lip}
\Vert G(U_1)-G(U_2)\Vert\le L \Vert U_1-U_2\Vert, \quad \forall\ U_1, U_2\in H,
\end{equation}
and we assume that $G(0)=0$.

Analogously to before, one can first give a well-posedness result. See \cite{NicaisePignotti18} for the proof in the case of constant time delay.

\begin{Theorem}\label{well_pNL}

Let $f:[-\bar \tau, 0]\to H$ be a continuous function. Then, the problem \eqref{PANL} has a unique (weak) solution given by Duhamel's formula
\begin{equation}\label{DuNL}
U(t)=S(t)U_0+\int_0^tS(t-s)[G(U(s))+ k(s)BU(s-\tau(s))]\ ds,
\end{equation}
for all $t\ge 0.$
\end{Theorem}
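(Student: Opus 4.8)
The plan is to mirror the fixed-point argument of Case 2 in the proof of Theorem \ref{well_p}, now folding the nonlinearity $G$ into the solution map. First I would fix $f\in C([-\bar\tau,0];H)$ and work on the same complete space $C_f([-\bar\tau,T];H)$, on which I define
\begin{equation*}
\Gamma U(t)=\begin{cases}
S(t)U_0+\int_0^t S(t-s)\left[G(U(s))+k(s)BU(s-\tau(s))\right]ds,&t\in[0,T],\\
f(t),&t\in[-\bar\tau,0),
\end{cases}
\end{equation*}
so that a fixed point of $\Gamma$ is precisely a solution of \eqref{PANL} satisfying \eqref{DuNL} on $[0,T]$. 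Well-definedness follows as before, the only new ingredient being that $s\mapsto G(U(s))$ is continuous (hence in $\mathcal{L}^1((0,T);H)$) because $U$ is continuous and $G$ is Lipschitz by \eqref{Lip}; thus $t\mapsto\int_0^t S(t-s)G(U(s))\,ds$ is continuous and $\Gamma U\in C_f([-\bar\tau,T];H)$.

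The key step is the contraction estimate. For $U,V\in C_f([-\bar\tau,T];H)$ and $t\in(0,T]$, using $\Vert S(t-s)\Vert\le M$ together with the Lipschitz bound \eqref{Lip} on the nonlinear term and the boundedness of $B$ on the delay term, I would obtain
\begin{equation*}
\Vert\Gamma U(t)-\Gamma V(t)\Vert\le M\left(LT+\Vert B\Vert\Vert k\Vert_{\mathcal{L}^1([0,T];\RR)}\right)\Vert U-V\Vert_{C([-\bar\tau,T];H)}.
\end{equation*}
Since $k\in\mathcal{L}^1_{loc}$, the quantity $LT+\Vert B\Vert\Vert k\Vert_{\mathcal{L}^1([0,T];\RR)}$ tends to $0$ as $T\to0^+$, so I can pick $T>0$ small enough that $M(LT+\Vert B\Vert\Vert k\Vert_{\mathcal{L}^1([0,T];\RR)})<1$; this upgrades the single smallness condition \eqref{contrazione} of the linear case to one controlling the delay and nonlinear contributions simultaneously. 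Banach's theorem then yields a unique fixed point, i.e. a unique local solution on $[0,T]$.

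To pass from local to global existence I would argue exactly as in Case 2. A Gronwall argument first shows the local solution is bounded: from \eqref{DuNL}, using $\Vert G(U(s))\Vert=\Vert G(U(s))-G(0)\Vert\le L\Vert U(s)\Vert$ (where $G(0)=0$ is essential) and splitting $\Vert U(s-\tau(s))\Vert$ into its contributions from $f$ and from $U$ on $[0,s]$, one reaches an integral inequality for $\max_{r\in[0,t]}\Vert U(r)\Vert$ whose kernel now carries the extra linear term $ML$ coming from $G$; Gronwall supplies an explicit bound on $[0,T]$. The solution then extends to a maximal interval $[0,\delta)$, and assuming $\delta<+\infty$ I would restart the fixed-point construction at time $\delta$ on $C_U([\delta-\bar\tau,T'];H)$, choosing $T'>\delta$ with $M(L(T'-\delta)+\Vert B\Vert\Vert k\Vert_{\mathcal{L}^1([\delta,T'];\RR)})<1$, and glue the pieces to contradict maximality; hence $\delta=+\infty$ and \eqref{PANL} has a unique global solution given by \eqref{DuNL}.

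The only genuinely new feature relative to Theorem \ref{well_p} is the nonlinear term, which I expect to be the main (though mild) obstacle: it forces both the contraction threshold and the Gronwall kernel to absorb an additional factor $ML$ alongside the delay contribution. Because this factor is multiplied by a power of $T$ in the local step, it is harmless there and merely enlarges the exponential rate in the boundedness estimate, so no difficulty of substance arises and the global bootstrap proceeds verbatim.
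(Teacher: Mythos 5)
Your proposal is correct and follows essentially the same route as the paper's general argument (its ``Case 2''): a Banach fixed-point on $C_f([-\bar\tau,T];H)$ with the smallness condition $M\bigl(LT+\lVert B\rVert\lVert k\rVert_{\mathcal{L}^1([0,T];\RR)}\bigr)<1$, followed by a Gronwall bound and a continuation argument to reach a global solution. The only material you omit is the paper's supplementary ``Case 1'' (a step-by-step construction valid under the extra hypothesis $\tau(t)\ge\tau_0>0$), which is not needed for the theorem as stated.
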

\begin{proof}
Let $f\in C([-\bar\tau, 0],H)$. As before, we can give two different proofs.
\\{\bf Case 1} Assume that \eqref{below} holds tue. We can  argue step by step, as before, by restricting ourselves each time to time intervals of length $\tau_0.$
\\First we consider $t\in [0,\tau_0]$. Then, from \eqref{below}, $t-\tau(t)\in [-\bar{\tau}, 0].$  So,
setting $F(t) =k(t)BU(t-\tau(t)),$ $t\in [0,\tau_0],$ we have that $F(t)= k(t)Bf(t-\tau(t)),$ $t\in [0,\tau_0]$.
Then, problem \eqref{PANL} can be rewritten, in the  interval $[0,\tau_0]$, as a standard inhomogeneous evolution problem:
\begin{equation}\label{step1NL}
\begin{array}{l}
U'(t)=AU(t)+G(U(t))+F(t) \quad \mbox{in}\ (0,\tau_0),\\
U(0)=U_0.
\end{array}
\end{equation}
Since  $k\in \mathcal{L}^1_{loc}([-\bar{\tau},+\infty);\RR)$ and $f\in C([-\bar\tau,0];H)$, then we have that $F\in \mathcal{L}^1((0,\tau_0);H)$.
Therefore, applying  the standard theory for nonlinear evolution equations (see e.g. \cite{Pazy}), there exists a unique solution $U \in C([0,\tau_0]; H)$ of \eqref{step1NL} satisfying the Duhamel's formula
$$
U(t)= S(t)U_0+\int_0^tS(t-s) [G(U(s))+ F (s)] ds,\quad t\in [0,\tau_0].
$$
Therefore,
$$
U(t)= S(t)U_0+\int_0^tS(t-s)[ G(U(s))+k(s)BU(s-\tau(s)) ] ds,\quad t\in [0,\tau_0].
$$
Next, we consider the time interval $[\tau_0, 2\tau_0]$ and also define
$F(t)=k(t)BU(t-\tau(t)),$ for $t\in [\tau_0, 2\tau_0].$ Note that, if  $t\in [\tau_0, 2\tau_0],$ then $t-\tau(t)\in [-\bar{\tau}, \tau_0]$ and so $U(t-\tau(t))$ is known from the first step. 
Then $F_{\vert_{ [\tau_0, 2\tau_0]}}$ is a known function and it belongs to $\mathcal{L}^1((\tau_0,2\tau_0); H)$.
So we can rewrite our model \eqref{PANL} in the time interval $[\tau_0,2\tau_0 ]$ as the inhomogeneous evolution problem
\begin{equation}\label{step2NL}
\begin{array}{l}
U'(t)=AU(t)+G(U(t))+F(t) \quad \mbox {for }\ t\in (\tau_0 ,2\tau_0),\\
U(\tau)=U (\tau_0^-).
\end{array}
\end{equation}
Then, we have a unique continuous solution $U:[\tau_0, 2\tau_0]\rightarrow H$ satisfying
$$
U(t)= S(t-\tau_0 )U(\tau_0^-) +\int_{\tau_0} ^tS(t -s) [G(U(s))+F (s)] ds,\quad t\in [\tau_0 ,2\tau_0],
$$
and so
$$
U(t)= S(t-\tau_0 )U(\tau_0^-) +\int_{\tau_0} ^t S(t -s) [G(U(s))+k(s)BU (s-\tau (s))] ds,\quad t\in [\tau_0 ,2\tau_0].
$$
Putting together the partial solutions obtained in the first and second steps we have a unique continuous solution $U:[0,2\tau_0)\rightarrow \RR$
satisfying  the Duhamel's formula
$$
U(t)= S(t)U_0+\int_0^t S(t-s) [G(U(s))+k(s)BU (s-\tau(s))] ds,\quad t\in [0,2\tau_0].
$$
Iterating this procedure we can find a unique solution $U\in C([0,+\infty);H)$ satisfying the representation formula \eqref{DuNL}.
\\{\bf Case 2} Let $\tau(\cdot)$ be a continuous function satisfying \eqref{bound}. 
\\Now, since $k\in {\mathcal L}^1_{loc}([-\bar{\tau},+\infty);\RR )$, there exists $T>0$ sufficiently small such that 
\begin{equation}\label{contrazione3}
	LT+\lVert B\rVert\lVert k\rVert_{{\mathcal L}^1([0,T];\RR )}=LT+\lVert B\rVert\int_{0}^{T}\lvert k(s)\rvert ds<\frac{1}{M},
\end{equation}
where $L$ is the Lipschitz constant in \eqref{Lip}.
We define the set $$\tilde C_f([-\bar\tau,T];H):=\{U\in C([-\bar\tau,T];H): U(s)=f(s),\,\forall s\in [-\bar{\tau},0]\}.$$ 
Let us note that $\tilde C_f([-\bar\tau,T];H)$ is a nonempty and closed subset of $C([-\bar\tau,T];H)$.
Hence, $(\tilde C_f([-\bar\tau,T];H),\lVert \cdot\rVert_{C([-\bar\tau,T];H)})$ is a Banach space.
\\Next, we define the map
$\tilde\Gamma:\tilde C_f([-\bar{\tau},T];H)\rightarrow \tilde C_f([-\bar{\tau},T];H)$ given by
$$\tilde\Gamma U(t)=\begin{cases}
	S(t)U_0+\int_{0}^t S(t-s)[G(U(s))+k(s)BU(s-\tau(s))]\, ds,\quad &t\in [0,T],
	\\f(t),\quad &t\in [-\bar{\tau},0).
\end{cases}$$
Let us note that $\tilde\Gamma$ is well-defined. 
\\Moreover, $\tilde\Gamma$ is a contraction. Indeed, let $U,V\in \tilde C_f([\bar{\tau},T];H)$. Then, for all $t\in [-\bar{\tau},0]$,
$$\lVert \tilde\Gamma U(t)-\tilde\Gamma V(t)\rVert=0.$$
On the other hand, for all $t\in (0,T]$, since $G$ is Lipschitz continuous we get
$$\begin{array}{l}
	\vspace{0.3cm}\displaystyle{\lVert \tilde\Gamma U(t)-\tilde\Gamma V(t)\rVert\leq \int_{0}^{t}\lVert S(t-s)\rVert G(U(s))-G(V(s))\rVert ds}\\
	\vspace{0.3cm}\displaystyle{\hspace{2cm}+ \int_{0}^{t}\lVert S(t-s)\rVert\lvert k(s)\rvert \lVert BU(s-\tau(s))-BV(s-\tau(s))\rVert ds}\\
	
	\displaystyle{\hspace{2cm}\leq M(LT+\lVert B\rVert\lVert k\rVert_{\mathcal{L}^1([0,T];\RR)})\lVert U-V\rVert_{C([-\bar{\tau},T];H)}.}
\end{array}$$
Thus, 
$$\lVert \tilde\Gamma U-\tilde\Gamma V\rVert_{C([-\bar{\tau},T];H)}\leq M(LT+\lVert B\rVert\lVert k\rVert_{\mathcal{L}^1([0,T];\RR)})\lVert U-V\rVert_{C([-\bar{\tau},T];H)}.$$
As a consequence, from \eqref{contrazione3} $\tilde \Gamma$ is a contraction. Thus, from the Banach's Theorem, $\tilde \Gamma$ has a unique fixed point $U\in \tilde C_f([-\bar{\tau},T];H)$, i.e. \eqref{PA} has a unique solution $U\in C([0,T];H)$ given by the Duhamel formula $$U(t)=S(t)U_0+\int_{0}^t S(t-s)[G(U(s))+k(s)BU(s-\tau(s))] \, ds,\quad \forall t\in [0,T].$$
Now, note that the solution $U$ is bounded. So, arguing as in {\bf Case 2} of Theorem \ref{well_p}, we can conclude that \eqref{PANL} has a unique continuous global solution that satisfies the Duhamel's formula \eqref{DuNL}.
\end{proof}

Under an appropriate relation between the problem's parameters, the system \eqref{PANL} is exponentially stable.

\begin{Theorem}\label{generaleCVNL}
Assume \eqref{assumption_delay} and $L<\frac {\omega-\omega'} M.$ Then, for every $f\in C([-\bar\tau, 0]; H),$ the solution $U\in C([0,+\infty);H)$ to \eqref{PANL} with the initial datum $f$ satisfies the exponential decay estimate
\begin{equation}
\label{stimaesponenzialeNL}
||U(t)||\leq Me^\gamma \left (\Vert U_0\Vert+e^{\omega \bar\tau}K\lVert B\rVert \max_{s\in[-\bar\tau, 0]}\left\{\Vert e^{\omega s} f(s)\Vert \right\}\right )e^{-(\omega -\omega'-ML)t}, 
\end{equation}
for any $t\geq 0$.
\end{Theorem}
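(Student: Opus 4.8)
The plan is to mirror the proof of Theorem \ref{generaleCV}, treating the nonlinear term $G(U(\cdot))$ as an additional forcing term and absorbing it into the same Gronwall argument. Starting from Duhamel's formula \eqref{DuNL} and the semigroup bound \eqref{SGdecay}, I would first write
$$\|U(t)\|\le Me^{-\omega t}\|U_0\|+Me^{-\omega t}\int_0^t e^{\omega s}\|G(U(s))\|\,ds+Me^{-\omega t}\int_0^t e^{\omega s}|k(s)|\,\|BU(s-\tau(s))\|\,ds.$$
Since $G(0)=0$ and $G$ is Lipschitz with constant $L$ (assumption \eqref{Lip}), the one genuinely new estimate is $\|G(U(s))\|=\|G(U(s))-G(0)\|\le L\|U(s)\|$, so the middle term is dominated by $MLe^{-\omega t}\int_0^t e^{\omega s}\|U(s)\|\,ds$.

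For the delay term I would repeat verbatim the manipulations of Theorem \ref{generaleCV}: split the integral at $\bar\tau$, use the bound \eqref{bound} on $\tau$ to factor out $e^{\omega\bar\tau}$, and use \eqref{damp_coeff} to control the contribution of the initial datum over $[-\bar\tau,0]$. Multiplying through by $e^{\omega t}$ and setting $\tilde u(t):=\max_{s\in[t-\bar\tau,t]\cap[0,t]}\{e^{\omega s}\|U(s)\|\}$ as before, I would arrive at
$$\tilde u(t)\le \tilde M+ML\int_0^t \tilde u(s)\,ds+M\|B\|e^{\omega\bar\tau}\int_0^t|k(s)|\,\tilde u(s)\,ds,$$
with $\tilde M:=M\big(\|U_0\|+e^{\omega\bar\tau}K\|B\|\max_{s\in[-\bar\tau,0]}\{\|e^{\omega s}f(s)\|\}\big)$. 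Here I use that $s\in[s-\bar\tau,s]\cap[0,s]$ for $s\ge0$, so that $e^{\omega s}\|U(s)\|\le \tilde u(s)$; this single fact controls both the nonlinear integrand (after the Lipschitz bound) and the delay integrand, letting the two terms be merged into one Gronwall kernel.

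Applying Gronwall's inequality then yields
$$\tilde u(t)\le \tilde M\exp\Big(MLt+M\|B\|e^{\omega\bar\tau}\int_0^t|k(s)|\,ds\Big),$$
and invoking \eqref{assumption_delay} to bound the last integral by $\gamma+\omega't$ gives $e^{\omega t}\|U(t)\|\le \tilde u(t)\le \tilde M\,e^{\gamma}e^{(\omega'+ML)t}$, which is exactly \eqref{stimaesponenzialeNL} after dividing by $e^{\omega t}$. The hypothesis $L<(\omega-\omega')/M$ is precisely what makes the resulting exponent $\omega-\omega'-ML$ positive, so that the estimate is a true decay. I do not expect a substantive obstacle here: the entire analytic difficulty is already handled in the linear Theorem \ref{generaleCV}, and the only new point is the clean absorption of the Lipschitz contribution $ML\int_0^t\tilde u(s)\,ds$ into the Gronwall exponent, which shifts the decay rate from $\omega-\omega'$ to $\omega-\omega'-ML$.
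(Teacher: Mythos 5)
Your proposal is correct and follows essentially the same route as the paper's own proof: Duhamel's formula, the Lipschitz bound $\|G(U(s))\|\le L\|U(s)\|$ absorbed via $e^{\omega s}\|U(s)\|\le\tilde u(s)$, the delay term handled exactly as in Theorem \ref{generaleCV}, and a single Gronwall step with kernel $M\|B\|e^{\omega\bar\tau}|k(s)|+ML$ yielding the shifted rate $\omega-\omega'-ML$. The only point the paper makes explicit that you leave implicit is that the right-hand side of the pre-Gronwall inequality, being non-decreasing in $t$, also dominates $\max_{s\in[t-\bar\tau,t]\cap[0,t]}\{e^{\omega s}\|U(s)\|\}$ and not just $e^{\omega t}\|U(t)\|$, which is what licenses applying Gronwall to $\tilde u$ itself.
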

\begin{proof}
Let $f\in C([-\bar\tau, 0]; H)$ and let $U$ be the unique global solution to \eqref{well_pNL} with initial datum $f$. Then, from Duhamel's formula \eqref{DuNL}, we have that
$$
\begin{array}{l}
	\displaystyle{||U(t)||\leq Me^{-\omega t} ||U_0||+Me^{-\omega t} \int_0^t e^{\omega s}||G(U(s))||ds}\\
	\displaystyle{\hspace{2cm}+ M\lVert B\rVert e^{-\omega t}\int_0^t e^{\omega s}|k(s)| \cdot ||U(s-\tau(s))||) ds,}
\end{array}
$$
for all $t\geq 0$. Now, using the same arguments employed in Theorem 
\ref{generaleCV}, we get
\begin{equation}\label{diseq2}
	\begin{array}{l}
		\displaystyle{\int_0^t e^{\omega s}|k(s)| \cdot ||U(s-\tau(s))|| \,ds\leq e^{\omega\bar\tau}\int_{0}^{\bar\tau}e^{\omega (s-\tau(s))}|k(s)| \cdot ||U(s-\tau(s))||\, ds}\\
		\displaystyle{\hspace{3cm}+e^{\omega\bar\tau}\int_{\bar\tau}^{t}e^{\omega( s-\tau(s))}|k(s)| \cdot ||U(s-\tau(s))||\, ds}\\
		\displaystyle{\hspace{2.5cm}\leq e^{\omega\bar\tau}K\max_{r\in [-\bar\tau, 0]}}\{||e^{\omega r}f(r)||\}+e^{\omega\bar\tau}\int_{0}^{t}|k(s)| \cdot\max_{r\in [s-\bar\tau, s]\cap [0,s]}\{e^{\omega r}||U(r)||\}ds,
	\end{array}
\end{equation}
for all $t\geq \bar{\tau}$. Also, for all $t\in [0,\bar{\tau}]$,
$$\begin{array}{l}
	\vspace{0.3cm}\displaystyle{\int_0^t e^{\omega s}|k(s)| \cdot ||U(s-\tau(s))|| \,ds\leq e^{\omega\bar\tau}\int_{0}^{t}e^{\omega (s-\tau(s))}|k(s)| \cdot ||U(s-\tau(s))||\, ds}\\
	\displaystyle{\hspace{1cm}\leq e^{\omega\bar\tau}K\max_{r\in [-\bar\tau, 0]}\{||e^{\omega r}f(r)||\}+e^{\omega\bar\tau}\int_{0}^{t}|k(s)| \cdot\max_{r\in [s-\bar\tau, s]\cap [0,s]}\{e^{\omega r}||U(r)||\}ds,}
\end{array}$$
i.e. \eqref{diseq2} holds true for all $t\geq 0$. Hence, from \eqref{diseq2} we get
$$\displaystyle{ ||U(t)||\le Me^{-\omega t}\left(||U_0||+e^{\omega\bar{\tau}}K\lVert B\rVert\max_{r\in [-\bar\tau, 0]}\{||e^{\omega r}f(r)||\}\right)}\hspace{5 cm}$$
\vspace{-0.3cm}
$$\hspace{1 cm}\displaystyle{+Me^{-\omega t}e^{\omega\bar\tau}||B||\int_{0}^{t}|k(s)| \cdot\max_{r\in [s-\bar\tau, s]\cap [0,s]}\{e^{\omega r}||U(r)||\}ds+Me^{-\omega t}}\int_{0}^{t}e^{\omega s}||G(U(s))||ds.$$
Now, let us note that, being $G(0)=0$ and being $G$ Lipschitz continuous of constant $L$,
$$\displaystyle{\int_{0}^{t}e^{\omega s}||G(U(s))||ds\leq L\int_{0}^{t}e^{\omega s}||U(s)||\leq L\int_{0}^{t}\max_{r\in [s-\bar\tau, s]\cap [0,s]}\{e^{\omega r}||U(r)||\}ds.}$$
As a consequence, we can write
$$\begin{array}{l}
\displaystyle{ ||U(t)||\le Me^{-\omega t}(||U_0||+e^{\omega\bar{\tau}}K||B||\max_{r\in [-\bar\tau, 0]}\{||e^{\omega r}f(r)||\})}\\
\hspace{0,5 cm}\displaystyle{\hspace{2cm}+Me^{-\omega t}\int_{0}^{t}(e^{\omega\bar\tau}||B|||k(s)|+L)\cdot\max_{r\in [s-\bar\tau, s]\cap [0,s]}\{e^{\omega r}||U(r)||\}ds.}
\end{array}
$$
Then, we have that
$$\begin{array}{l}
\displaystyle{ e^{\omega t}||U(t)|| \le M \left (||U_0||+
e^{\omega\bar\tau} K ||B||\max_{r\in[-\bar\tau, 0]}\left\{\Vert e^{\omega r} f(r)\Vert\right\}
\right )}\\
\hspace{2 cm}
\displaystyle{+M  \int_0^t(e^{\omega\bar\tau}||B|||k(s)|+L)\cdot \max_{r\in [s-\bar\tau, s]\cap [0,s]}\left\{e^{\omega r} ||U(r)||\right \}ds, \quad \forall t\ge 0.}
\end{array}
$$
Now, we note that
$$
\begin{array}{l}
\displaystyle{ \max_{r\in [t-\bar\tau, t]\cap [0,t]}\left\{e^{\omega r}||U(r)||\right\} \le M  \left (||U_0||+
e^{\omega\bar\tau} K ||B||\max_{s\in [-\bar\tau, 0]}\left\{\Vert e^{\omega s} f(s)\Vert\right\}
\right )}\\
\hspace{2 cm}
\displaystyle{+M  \int_0^t (e^{\omega\bar\tau}||B|||k(s)|+L)\cdot \max_{r\in [s-\bar\tau, s]\cap [0,s]}\left\{e^{\omega r} ||U(r)||\right \}ds, \quad \quad \forall t\ge 0.}
\end{array}
$$
Hence, if we denote with
$$\tilde u (t):=\max_{r\in [t-\bar\tau, t]\cap [0,t]}\left\{e^{\omega r}||U(r)||\right\},$$
Gronwall's estimate yields
$${\tilde u}(t)\le {\tilde M} e^{  M\Vert B\Vert  e^{\omega\bar\tau}\int_0^t \vert k(s)\vert ds+MLt},\quad \forall t\ge 0
$$
where
$$\tilde M:= M\left (
\Vert U_0\Vert+e^{\omega \bar\tau}K||B|| \max_{r\in[-\bar\tau, 0]}\left\{\Vert e^{\omega r} f(r)\Vert\right\}
\right ). 
$$
Then, 
$$e^{\omega t}\Vert U(t)\Vert\le {\tilde M} e^{  M \Vert B\Vert e^{\omega\bar\tau}\int_0^t \vert k(s)\vert ds+MLt}.
$$
Finally, by the assumption \eqref{assumption_delay} and the assumption on the Lipschitz constant $L$, we get the exponential  decay estimate \eqref{stimaesponenzialeNL}.\end{proof}

\section{Examples}
\label{Examples}\hspace{5mm}
In this section we consider, as concrete examples for which previous abstract well-posedness and stability results hold, the wave equation with localized frictional damping and delay feedback and an elasticity system with analogous feedback laws.

\setcounter{equation}{0}
\subsection{The damped wave equation}

Let $\Omega$ be an open bounded subset of $\RR^d$, with boundary $\partial\Omega$ of class $C^2,$ and let $\mathcal O \subset \Omega$ be an open subset which satisfies the geometrical control property in \cite{Bardos}. 
For instance, $\mathcal O \subset \Omega$ can be a neighborhood of  the whole boundary $\partial\Omega$ or, denoting by $m$ the standard multiplier $m(x)=x-x_0,$ $x_0\in \RR^d,$ as in \cite{Lions}, $\mathcal O$ can be the intersection of $\Omega$ with an open neighborhood of the set 
$$\Gamma_0=\left\{ \,x\in\Gamma\ :\ m(x)\cdot \nu(x)>0\,\right\}.$$
Moreover, let $\tilde{\mathcal{O}}\subset\Omega$ be another open subset. Denoting by $\chi_ {\mathcal O}$ and $\chi_ {\tilde {\mathcal O}}$  the characteristic functions of the sets ${\mathcal O}$ and $\tilde{{\mathcal O}}$ respectively,  we consider the following wave equation
\begin{equation}\label{wave}
\begin{array}{l}
\displaystyle{u_{tt}(x,t)-\Delta u(x,t)+a\chi_{\mathcal{O}}(x) u_t(x,t)}\\
\hspace{3,8 cm}
\displaystyle{+k(t)\chi_{\tilde{\mathcal O}}(x)u_t(x,t-\tau(t))=0, \quad (x,t)\in\Omega\times (0,+\infty),}\\
\displaystyle{u(x,t)=0, \quad (x,t)\in\partial\Omega\times (0,+\infty),}\\
\displaystyle{u(x,s)=u_0(x,s), \quad u_t(x,s)= u_1(x,s), \quad (x,s)\in {\Omega}\times [-\bar\tau,0],}
\end{array}
\end{equation}
where $a$ is a positive constant, $\tau(t)$ is the time delay function satisfying $0\le\tau(t)\le\bar\tau,$  and the delayed damping coefficient $k(\cdot):[-\bar{\tau},+\infty)\to (0,+\infty)$ is a $\mathcal{L}^1_{loc}([-\bar{\tau},+\infty))$ function satisfying \eqref{damp_coeff}. 
%%$H=L^2(\Omega),$ and the operator $A: {\mathcal D}(A)\rightarrow H$ defined as
%%$A=-\Delta$ with domain ${\mathcal D}(A)=H^2(\Omega)\cap H_0^1(\Omega)$. 
%%The operator $A$ is positive, self-adjoint, with dense domain and compact inverse in $H.$
%%We then define $W_1:=L^2({\mathcal O}),  W_2:=L^2(\tilde{\mathcal O}),$ and the operators 
%%$$C:W_1\rightarrow H: \quad v\rightarrow \sqrt{a} \tilde v\chi_ {\mathcal O},$$
%%$$B:W_2\rightarrow H: \quad v\rightarrow \tilde v\chi_ {\tilde{\mathcal O}},$$
%where $\tilde v$
%is the extension of v by zero outside  ${\mathcal O}$ and ${\tilde{\mathcal O}}$ %respectively.
Denoting $v(t)=u_t(t)$ and $U(t)=(u(t),v(t))^T,$ for any $t\ge 0,$, we can rewrite system \eqref{wave} in the abstract form \eqref{PA}, with ${\mathcal H}=H_0^1(\Omega)\times L^2(\Omega)$,
$$
 A=\begin{pmatrix}
0 & Id \\
\Delta & -a\chi_{\mathcal O}
\end{pmatrix}
$$
and 
$$
 B \begin{pmatrix} u \\ v \end{pmatrix} = \begin{pmatrix} 0 \\ -\chi_{\tilde{\mathcal{O}}} v \end{pmatrix},  \quad \forall\ t \geq 0.
$$
We know that ${A}$ generates an exponentially stable $C_0$-semigroup $\{S(t)\}_{t\geq 0}$ (see e.g. \cite{K}), namely there exist $\omega,M>0$ such that
$$
||S(t)||_{\mathcal{L}(\mathcal H)} \leq Me^{-\omega t}, \quad \forall \ t\geq 0.
$$
Hence, under the assumption \eqref{assumption_delay}, the stability estimate of Theorem \ref{generaleCV} holds for such a model. Then, we can deduce an exponential decay estimate for the
energy functional
$$E(t):= \frac 12 \int_\Omega |u_t(x,t)|^2 dx+\frac 12 \int_{\Omega} |\nabla u(x,t)|^2 dx+\frac 12 \int_{t-\bar\tau}^t\int_{\tilde{\mathcal O}} |k(s)|\cdot |u_t(x,s)|^2 dx ds.
$$
\begin{Theorem}\label{exp_wave}
Under assumption \eqref{assumption_delay}, for all initial data $(u_0, u_1)\in C([-\bar\tau, 0]; H_0^1(\Omega)\times L^2(\Omega)),$ the solution to
\eqref{wave} satisfies the energy decay estimate
\begin{equation}\label{energydecay}
E(t) \le C_* e^{-\beta t}, \quad t\ge 0,
\end{equation}
where $C_*$ is a constant depending on the initial data and $\beta>0.$
\end{Theorem}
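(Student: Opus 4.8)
The plan is to read the energy decay off the abstract stability estimate \eqref{stimaesponenziale} of Theorem \ref{generaleCV}. Equip $\mathcal{H}=H_0^1(\Omega)\times L^2(\Omega)$ with the energy norm, so that for $U=(u,u_t)^T$ one has $\|U(t)\|_{\mathcal{H}}^2=\int_\Omega|\nabla u(x,t)|^2\,dx+\int_\Omega|u_t(x,t)|^2\,dx$ (using $\int|\nabla\cdot|^2$ as the $H_0^1$-norm, which is admissible by Poincaré). Consequently the first two summands of $E(t)$ are precisely $\tfrac12\|U(t)\|_{\mathcal{H}}^2$. Since the operators $A$ and $B$ displayed above fit the abstract framework and $k$ satisfies \eqref{damp_coeff} and \eqref{assumption_delay}, Theorem \ref{generaleCV} applies and gives $\|U(t)\|_{\mathcal{H}}\le C\,e^{-(\omega-\omega')t}$ for all $t\ge0$, where $C:=Me^{\gamma}\big(\|U_0\|+e^{\omega\bar\tau}K\|B\|\max_{s\in[-\bar\tau,0]}\{\|e^{\omega s}f(s)\|\}\big)$. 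Squaring, the kinetic-plus-potential part of $E(t)$ is bounded by $\tfrac12 C^2 e^{-2(\omega-\omega')t}$.

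It remains to control the memory term $\tfrac12\int_{t-\bar\tau}^{t}\int_{\tilde{\mathcal O}}|k(s)|\,|u_t(x,s)|^2\,dx\,ds$. For $s\in[t-\bar\tau,t]\cap[0,\infty)$ I would use $\int_{\tilde{\mathcal O}}|u_t(x,s)|^2\,dx\le\|U(s)\|_{\mathcal{H}}^2\le C^2e^{-2(\omega-\omega')s}$, together with the elementary bound $e^{-2(\omega-\omega')s}\le e^{2(\omega-\omega')\bar\tau}e^{-2(\omega-\omega')t}$ valid on that interval; the uniform integrability \eqref{damp_coeff} of $k$ over windows of length $\bar\tau$ then yields a contribution bounded by $\tfrac12 C^2 e^{2(\omega-\omega')\bar\tau}K\,e^{-2(\omega-\omega')t}$. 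Adding this to the estimate of the previous paragraph produces the desired bound $E(t)\le C_*e^{-\beta t}$ with $\beta:=2(\omega-\omega')>0$ (positive since $\omega'<\omega$) and $C_*$ depending only on the data through $C$, $K$ and $\bar\tau$.

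The only point requiring care---and the one I would treat as the main obstacle, though it is a mild one---is the portion of the delay window $[t-\bar\tau,t]$ lying in $[-\bar\tau,0]$, which occurs when $t<\bar\tau$. On that part $u_t(\cdot,s)=u_1(\cdot,s)$ is the prescribed initial velocity; since $f=(u_0,u_1)\in C([-\bar\tau,0];\mathcal{H})$ is bounded, this contributes at most $\tfrac12 K\max_{s\in[-\bar\tau,0]}\|f(s)\|_{\mathcal{H}}^2$, a constant. As this only affects $t\in[0,\bar\tau]$, where $e^{-\beta t}\ge e^{-\beta\bar\tau}$, the constant is absorbed into $C_*$ after multiplying by $e^{\beta\bar\tau}$. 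Collecting the three contributions gives \eqref{energydecay}.
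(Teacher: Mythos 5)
Your proof is correct and follows essentially the same route as the paper: split $E(t)$ into $\tfrac12\|U(t)\|_{\mathcal H}^2$ plus the delay/memory integral, invoke Theorem \ref{generaleCV} for the exponential bound on $\|U(t)\|$, and control the memory term via \eqref{damp_coeff}. Your explicit treatment of the portion of the window $[t-\bar\tau,t]$ lying in $[-\bar\tau,0]$ (and your keeping of the squares in the exponents) is in fact slightly more careful than the paper's version, but the argument is the same.
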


\begin{proof}
From the energy's definition,
\begin{equation}\label{P11}
\begin{array}{l}
\displaystyle{E(t)= \frac 1 2 \Vert U(t)\Vert^2+\frac 12 \int_{t-\overline\tau}^t\int_{\tilde{\mathcal O}} |k(s)|\cdot |u_t(x,s)|^2 dx ds}\\
\vspace{0.3 cm}\displaystyle{\hspace{2,3 cm}\le \frac 12 \Vert U(t)\Vert^2+\frac 12 \int_{t-\bar\tau}^t |k(s)|\Vert U(s)\Vert^2 ds.}
\end{array}
\end{equation}
Then, from Theorem \ref{generaleCV}, 
$$\Vert U(t)\Vert \le C_0e^{-(\omega-\omega^\prime)t},\quad \forall t\ge 0,$$
for a suitable constant $C_0$ depending on the initial data. 
So, we can estimate
$$\int_{t-\bar\tau}^t |k(s)|\Vert U(s)\Vert^2 ds \le 
C_0 K e^{(\omega-\omega^\prime)\bar\tau}\, e^{-(\omega-\omega^\prime)t}, \quad \forall t\ge 0.$$
By using the last two inequalities in \eqref{P11}, we obtain the exponential decay estimate \eqref{energydecay}.
\end{proof}

\begin{Remark}\label{Plate}{\rm
As another example, we could consider the damped plate equation (see e.g. \cite{NicaisePignotti14}
for the model details). The analysis is analogous to the wave case above.
Then, under suitable assumptions, the exponential stability result holds for that model.}
\end{Remark}

\subsection{A damped elasticity system}
\bigskip

Let $\Omega\subset\RR^d,$ and let  $\tilde{\mathcal{O}},\mathcal O \subset \Omega $ be as in the previous example.
We consider the following elastodynamic   system
\begin{equation}\label{elasticity}
\begin{array}{l}
\displaystyle{u_{tt}(x,t)-\mu \Delta u(x,t)- (\lambda+\mu)\nabla \, \dv u+a\chi_{\mathcal{O}}(x) u_t(x,t)}\\
\vspace{2 pt}\hspace{3,5 cm}
\displaystyle{+k(t)\chi_{\tilde{\mathcal O}}(x)u_t(x,t-\tau(t))=0, \quad (x,t)\in\Omega\times (0,+\infty),}\\
\vspace{2 pt}\displaystyle{u(x,t)=0, \ \ \quad\quad (x,t)\in\partial\Omega\times (0,+\infty),}\\
\displaystyle{u(x, s)=u_0(x, s), \quad u_t(x,s)=u_1(x,s), \quad (x,s)\in {\Omega}\times [-\bar\tau,0],}
\end{array}
\end{equation}
where $a$ is a positive constant, $\tau(t)$ is the time delay function satisfying $0\le\tau(t)\le\bar\tau,$  and the delayed damping coefficient $k(\cdot):[-\bar\tau,+\infty)\to (0,+\infty)$ is a $\mathcal{L}^1_{loc}([-\bar\tau ,+\infty))$ function satisfying \eqref{damp_coeff}. 
Note that, in this case, the function $u$ is vector-valued and takes values in $\RR^d$ while $\lambda$ and $\mu$ are positive constants usually called Lam\'{e} coefficients.
%%$H=L^2(\Omega),$ and the operator $A: {\mathcal D}(A)\rightarrow H$ defined as
%%$A=-\Delta$ with domain ${\mathcal D}(A)=H^2(\Omega)\cap H_0^1(\Omega)$. 
%%The operator $A$ is positive, self-adjoint, with dense domain and compact inverse in $H.$
%%We then define $W_1:=L^2({\mathcal O}),  W_2:=L^2(\tilde{\mathcal O}),$ and the operators 
%%$$C:W_1\rightarrow H: \quad v\rightarrow \sqrt{a} \tilde v\chi_ {\mathcal O},$$
%%$$B:W_2\rightarrow H: \quad v\rightarrow \tilde v\chi_ {\tilde{\mathcal O}},$$
%where $\tilde v$
%is the extension of v by zero outside  ${\mathcal O}$ and ${\tilde{\mathcal O}}$ %respectively.
Denoting $v(t)=u_t(t)$ and $U(t)=(u(t),v(t))^T,$ for any $t\ge 0,$, we can rewrite system \eqref{elasticity} in the abstract form \eqref{PA}, with ${\mathcal H}={H_0^1(\Omega)}^d\times {L^2(\Omega)}^d$,
$$
 A=\begin{pmatrix}
0 & Id \\
\mu\Delta+ (\lambda+\mu)\nabla \dv & -a\chi_{\mathcal O}
\end{pmatrix}
$$
and 
$$
 B \begin{pmatrix} u \\ v \end{pmatrix} = \begin{pmatrix} 0 \\ -\chi_{\tilde{\mathcal{O}}} v \end{pmatrix},  \quad \forall\ t \geq 0.
$$
We know that ${A}$ generates an exponentially stable $C_0$-semigroup $\{S(t)\}_{t\geq 0}$ (see e.g. \cite{Valeria}), namely there exist $\omega,M>0$ such that
$$
||S(t)||_{\mathcal{L}(\mathcal H)} \leq Me^{-\omega t}, \quad \forall \ t\geq 0.
$$
Hence, under the assumption \eqref{assumption_delay}, the stability estimate of Theorem \ref{generaleCV} holds for such a model. Therefore, we can deduce an exponential decay estimate for the
energy functional
$$\begin{array}{l}
\displaystyle{{\cal E}(t):= \frac 12 \int_\Omega |u_t(x,t)|^2 dx+\frac 12 \int_{\Omega}  {\Big [\mu \sum_{i,j=1}^n\Big (\frac {\partial}{\partial x_i} u_j(x,t)\Big )^2+ (\lambda+\mu)\vert \dv u\vert \Big ]dx}}\\
\hspace{6,5 cm} \displaystyle{
+\frac 12 \int_{t-\bar\tau}^t\int_{\tilde{\mathcal O}} |k(s)|\cdot |u_t(x,s)|^2 dx ds.}
\end{array}
$$
\begin{Theorem}\label{exp_elasticity}
Assume \eqref{assumption_delay}. Then, for all initial data $(u_0, u_1)\in 
C([-\bar\tau, 0]; H_0^1(\Omega)^d\times L^2(\Omega)^d),$
the solution to
\eqref{elasticity} satisfies the energy decay estimate
\begin{equation}\label{energy_decay}
{\cal E}(t) \le \bar{C} e^{-\beta^* t}, \quad t\ge 0,
\end{equation}
where $\bar C$ is a constant depending on the initial data and $\beta^*>0.$
\end{Theorem}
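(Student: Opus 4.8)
The plan is to mirror the proof of Theorem \ref{exp_wave}, since the elasticity system embeds into the abstract framework \eqref{PA} in exactly the same way as the damped wave equation. First I would observe that the first two integral terms in the definition of ${\cal E}(t)$ coincide with one half of the squared norm $\Vert U(t)\Vert^2$ on ${\mathcal H}=H_0^1(\Omega)^d\times L^2(\Omega)^d$, whose elastic part
$$
\int_\Omega \Big[\mu\sum_{i,j=1}^d \Big(\frac{\partial}{\partial x_i}u_j\Big)^2+(\lambda+\mu)\vert\dv u\vert^2\Big]\,dx
$$
is precisely the Dirichlet form associated with $A$. With $\lambda,\mu>0$ this form is coercive on $H_0^1(\Omega)^d$ (by Poincar\'e's inequality the $\mu$-term alone already dominates the $H_0^1$-norm), so it furnishes an admissible energy norm. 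Bounding then $\int_{\tilde{\mathcal O}}\vert u_t(x,s)\vert^2\,dx$ by $\int_\Omega\vert u_t(x,s)\vert^2\,dx\le\Vert U(s)\Vert^2$, I would arrive at
$$
{\cal E}(t)\le \frac12\Vert U(t)\Vert^2+\frac12\int_{t-\bar\tau}^t\vert k(s)\vert\,\Vert U(s)\Vert^2\,ds,
$$
exactly as in \eqref{P11}.

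Next I would invoke Theorem \ref{generaleCV}, which applies verbatim because $A$ generates an exponentially stable $C_0$-semigroup (cf. \cite{Valeria}) and $B$ is the same bounded operator as in the wave case, to obtain a constant $C_0$, depending on the initial data, with
$$
\Vert U(t)\Vert\le C_0\,e^{-(\omega-\omega')t},\qquad\forall\,t\ge0.
$$
Squaring and inserting into the delayed integral, I would use the uniform bound \eqref{damp_coeff} on $\int_{t-\bar\tau}^t\vert k(s)\vert\,ds$, together with the elementary estimate $e^{-2(\omega-\omega')s}\le e^{2(\omega-\omega')\bar\tau}\,e^{-2(\omega-\omega')t}$ valid for $s\in[t-\bar\tau,t]$, to get
$$
\int_{t-\bar\tau}^t\vert k(s)\vert\,\Vert U(s)\Vert^2\,ds\le C_0^2\,K\,e^{2(\omega-\omega')\bar\tau}\,e^{-2(\omega-\omega')t}.
$$

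Combining the last two displays in the bound for ${\cal E}(t)$ would then yield the decay estimate \eqref{energy_decay} with $\beta^*=2(\omega-\omega')>0$ and $\bar C$ a constant depending only on the initial data. I do not expect any serious obstacle here: the argument is essentially a transcription of the wave case, and the only point deserving a word of justification is the identification of the elastic energy with the square of the ${\mathcal H}$-norm, which rests on the coercivity of the Lam\'e bilinear form on $H_0^1(\Omega)^d$, a standard consequence of $\lambda,\mu>0$ and Poincar\'e's inequality.
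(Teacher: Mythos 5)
Your proposal is correct and follows essentially the same route as the paper, which simply states that the proof is analogous to that of Theorem \ref{exp_wave}: bound ${\cal E}(t)$ by $\frac12\Vert U(t)\Vert^2+\frac12\int_{t-\bar\tau}^t\vert k(s)\vert\,\Vert U(s)\Vert^2\,ds$, apply Theorem \ref{generaleCV}, and use \eqref{damp_coeff} on the delayed integral. Your added justification of the coercivity of the Lam\'e form (so that the elastic energy is equivalent to the $H_0^1(\Omega)^d$-norm) is a point the paper leaves implicit, and your explicit rate $\beta^*=2(\omega-\omega')$ is consistent with (indeed slightly sharper in bookkeeping than) the paper's computation.
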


\begin{proof} The proof comes analogously to the one of Theorem \ref{exp_wave}.
\end{proof}

\noindent {\bf Acknowledgements.} We would like to thank GNAMPA and UNIVAQ  for the support.

\end{document}